\documentclass[12pt]{amsart}
\usepackage{graphicx} 

\usepackage{amssymb, amsthm, enumerate, enumitem, graphicx, amscd, amsmath, mathrsfs, amsfonts, xcolor, comment}
\usepackage{hyperref}
\hypersetup{
colorlinks=true,
linktoc=all,
linkcolor=black,
citecolor=darkgray
}

\usepackage[mathcal]{euscript}
\usepackage{enumerate}
\usepackage{geometry}
\geometry{a4paper,headsep=20pt,bottom=3cm} 

\newtheorem{theorem}{Theorem}[section]
\newtheorem{theorem2}{Theorem} 

\newtheorem{lemma}[theorem]{Lemma}
\newtheorem{corollary}[theorem]{Corollary}

\newtheorem{proposition}[theorem]{Proposition}
\newtheorem{theorem*}{Theorem}

\theoremstyle{definition}

\theoremstyle{remark}

\theoremstyle{remark}
\newtheorem*{remark}{Remark}

\title[De Branges--Rovnyak and Dirichlet spaces]{On the equality of De
  Branges--Rovnyak and Dirichlet spaces}
\author[E.\ Dellepiane]{Eugenio Dellepiane}
\address{Dipartimento di Scienze Matematiche “Giuseppe Luigi Lagrange”, Politecnico di Torino, Corso Duca degli Abruzzi 24, 10129 Torino, Italy}
\email{eugenio.dellepiane@polito.it}

\author[M.\ M.\ Peloso]{Marco M. Peloso}
\address{Dipartimento di Matematica “Federigo Enriques", Università degli studi di Milano, Via Cesare Saldini 50, 20133 Milano, Italy}
\email{marco.peloso@unimi.it}

\author[A.\ Tabacco]{Anita Tabacco}
\address{Dipartimento di Scienze Matematiche “Giuseppe Luigi Lagrange”, Politecnico di Torino, Corso Duca degli Abruzzi 24, 10129 Torino, Italy}
\email{anita.tabacco@polito.it}

\date{}
\keywords{de Branges--Rovnyak spaces, Dirichlet spaces, Hardy spaces, reproducing kernels}
\subjclass{30H45, 30H10, 46E22, 30J99}
\thanks{The authors are members of Gruppo Nazionale per l’Analisi
  Matematica, la Probabilit\`a e le loro Applicazioni (GNAMPA) of
  Istituto Nazionale di Alta Matematica (INdAM). The second author was
  partially supported by the 2025 INdAM–GNAMPA project {\em Function Spaces and Applications} (CUP\_E5324001950001).}
\begin{document}

\begin{abstract}
 This work is devoted to the comparison of de Branges--Rovnyak spaces $H(b)$ and harmonically weighted Dirichlet spaces $\mathcal{D}_\mu$. We completely characterize which $H(b)$ spaces are also harmonically
weighted Dirichlet spaces $\mathcal{D}_\mu$, when  $\mu$ is a finite sum of atoms. This is a generalization of a previous result by
Costara--Ransford \cite{costara2013}: we make no assumptions on the Pythagorean pair $(b,a)$, and we produce new examples.
\end{abstract}
\maketitle
\section{Introduction}
The de Branges--Rovnyak spaces $H(b)$ are a class of spaces of analytic functions on the unit disk $\mathbb{D}$ of the complex plain that serve as a fundamental bridge between
function theory and operator theory. While highly technical in nature, these spaces provide essential tools for solving concrete problems in complex analysis. Their study offers profound insights into the structure of contractions on Hilbert spaces, making them indispensable in modern analysis despite their abstract formulation.

The $H(b)$ spaces were introduced by de Branges and Rovnyak \cite{Debrangesrovnyak}, as a generalization of the complementary of the range space for the multiplication operator~$T_b$. Here, $b$ is an
analytic function on $\mathbb{D}$ that is uniformly bounded by $1$. Ever since their definition, the $H(b)$ spaces have attracted a lot of interest, because of their numerous applications to both operator theory and complex analysis. They provide canonical model
spaces for certain types of contractions on Hilbert spaces \cite{nagyfoias,Timotin}, and they were instrumental in the  proof of the Bieberbach conjecture  by de Branges, \cite{bieberbachdebranges}. 

In \cite{sarHb} Sarason started a new approach to this theory and suggested a new equivalent definition for the $H(b)$ spaces. In this formulation, the de Branges--Rovnyak spaces are realized as the range of the square root of the positive operator $I-T_b T_b^*$. They
are subsets of the Hardy space $H^2$ that are not necessarily closed in the norm topology of $H^2$. Many questions about the $H(b)$ spaces, even some apparently obvious ones, have been open and of interest in
the last decades, see e.g.\ the  monographs  \cite{hb1, hb2,sarHb} entirely devoted to its study. See also the nice introductory paper \cite{Timotin}. The main problem is that in general there is no direct way to establish whether a function $f$ belongs to an $H(b)$ space. Too little is know on the structure of
general $H(b)$ spaces. An explicit description of their elements is known only for a very limited number of cases. For example, in \cite{concrete} Fricain, Hartmann and Ross gave an explicit description of $H(b)$ when $b$ is of the form  $b=q^r,$ $q$ being a
rational outer function and $r>0$, and in \cite{FricainGrivaux} Fricain and Grivaux studied the case when $b$ has the form $b=(1+u)/2$ for some inner function~$u$.

A significant fact that helps us understanding the structure
of certain $H(b)$ spaces is the following: in some cases, $H(b)$
spaces are also weighted Dirichlet spaces, and in this latter case,
there is an explicit definition of the space through an integral. This
connection is particularly important because it transforms a highly
abstract 
space into one with a concrete integral representation, allowing us to
determine precisely which functions belong to the space. This bridge
between abstract operator 
theory and concrete function theory is what makes the relationship
between $H(b)$ spaces and weighted Dirichlet spaces valuable.

Given a positive finite Borel measure $\mu$ on $\mathbb{T}$, the \emph{harmonically weighted} Dirichlet space $\mathcal{D}_\mu$ is defined as
\[\mathcal{D}_\mu=\big\{f\in\operatorname{Hol}(\mathbb{D})\colon \mathcal{D}_\mu(f):=\int_\mathbb{D} |f'(z)|^2P_\mu(z) \operatorname{d}\!A(z)<\infty\big\}, \]
where $\operatorname{d}\!A$ is the area measure in $\mathbb{D}$, \[P_\mu(z)=\int_{\mathbb{T}}\frac{1-|z|^2}{|z-\zeta|^2}\operatorname{d}\!\mu(\zeta), \qquad z\in\mathbb{D},\]
is the Poisson integral of $\mu$ and $\mathbb{T}:=\partial\mathbb{D}$ is the unit circle.

They have been introduced by Richter in \cite{Richter1991}, in the context of the representation of cyclic analytic 2-isometries. They also play an important role in the study of the forward shift $S$ on the classic Dirichlet space $\mathcal{D}$: they appear in a very beautiful Beurling-type classification theorem for closed $S$-invariant subspaces of $\mathcal{D}$ \cite{RS1}. An important example in this class of spaces is obtained when $\mu=\delta_\zeta$ is a Dirac delta anchored at the point $\zeta\in\mathbb{T}$. In this case, we have the so-called \emph{local} Dirichlet space, that we denote $\mathcal{D}_\zeta$. This is of great importance in this theory because of a disintegration formula that allows to express more general Dirichlet integrals in terms of the local one, that is,
\begin{equation*}
    \mathcal{D}_\mu(f)=\int_{\mathbb{T}} \mathcal{D}_\zeta(f) \operatorname{d}\!\mu(\zeta).
    \end{equation*}
    
    In \cite{Sarason1997LocalDS}, Sarason showed that 
    $H(b)$ and $\mathcal{D}_\mu$  are not two
    completely separate classes of spaces. He proved that $\mathcal{D}_\zeta$ coincides, with equality of norms, with $H(b_\zeta)$, where $b_\zeta$ is  the rational function
\begin{equation}\label{E:b1}
    b_\zeta(z):= \frac{(1-s_0)\overline{\zeta}z}{1-s_0 \overline{\zeta}z}, \qquad z\in\mathbb D,
\end{equation}
where $s_0=(3-\sqrt{5})/2$. 
This means that $H(b_\zeta) = \mathcal{D}_\zeta$ as sets and 
\begin{equation}\label{E:equality}
    \|f\|_{H(b_\zeta)}= \|f\|_{\mathcal{D}_\zeta}, \qquad f \in H(b_\zeta).
\end{equation}

Later, Chevrot, Guillot and Ransford \cite{Chevrot2010} showed that the case studied by Sarason is basically the only case in which it holds $H(b) = \mathcal{D}_\mu$ with equality of norms as in \eqref{E:equality}. 
 In \cite{costara2013}, Costara and Ransford showed that
 there can be an equality of sets $H(b) = \mathcal{D}_\mu$ with just
 an equivalence of norms, when $\mu$ is a finitely supported measure,
 i.e., a finite sum of Dirac deltas centered at boundary points, and
 the \emph{Pythagorean mate} of $b$, that we will properly define
 later, is a rational function. 

Recently, in \cite{bellavita2023embedding}, the problem of the embedding $H(b)\hookrightarrow\mathcal{D}_\mu$ was dealt with in a greater generality, and in \cite{differencequotientoperator} a quantitative study of such embedding is carried out. In \cite{malmanseco}, the opposite problem of embeddings into the $H(b)$ space was discussed.

The main goal of this work is to sharpen the
result of Costara--Ransford in \cite{costara2013}, dropping the assumption that the Pythagorean mate of $b$ is rational. In Theorem~\ref{T:equalitymuatom}, we completely characterize the case when
$H(b)$ coincides with $\mathcal{D}_\mu$ for a finitely supported measure $\mu$. This allows to obtain an explicit description of $H(b)$ spaces for a wider class of functions $b$, using the crucial
information obtained by the equality $H(b)=\mathcal{D}_\mu$. We
also discuss a similar result for the de
Branges--Rovnyak spaces  associated to outer rational
functions. With Theorem \ref{T:mainpolynomial}, we 
describe which $H(b)$ spaces have the form 
\[H(b)=\big\{q+h\prod_{j=1}^N(z-\zeta_j)^{m_j}\colon h\in H^2,\, q\,\,\text{polynomial of degree}\,\leq \sum_{j=1}^N m_j\big\},\]
where $\zeta_1,\ldots,\zeta_N\in\mathbb{T}$ and $m_1,\ldots,m_N\geq 1$ are integers. Although these results offer a complete picture from a theoretic point of view, they may be difficult to apply since in general the Pythagorean mate of $b$ is not explicit. To overcome this, we also discuss a direct application in the case were $b$ is continuous up to the boundary, and we produce an interesting example: we show that, when $b$ is the exponential function $b(z)=e^{z^N-1}$, then $H(b)=\mathcal{D}_\mu$, $\mu$ being the measure supported on the set of the $N$-th roots of $1$. See Theorems \ref{T:applicationbcontinuous} and \ref{T:Hbexp}.

The paper is structured in the following way. In Section \ref{S:preliminaries} we gather basic definitions and important preliminary results, so that in the same section we can state the main theorems. Section \ref{S:results} is devoted to proving the main theorems, whereas Section \ref{S:examples} contains further results, examples and applications. 

\section{Preliminaries and main results} \label{S:preliminaries}
 In this work, every space of analytic function that we consider is contained in the Hardy space
\[H^2=\big\{f\in\operatorname{Hol}(\mathbb{D})\colon \|f\|_{H^2}^2:= \sup_{0<r<1} \int_{\mathbb{T}}|f(r\zeta)|^2\operatorname{d}\!m(\zeta) <\infty\big\},\]
where $m$ is the Lebesgue measure on $\mathbb{T}$, normalized so that $m(\mathbb{T})=1$.

Throughout the paper, the function $b$ is a bounded analytic function with $\|b\|_{H^\infty}=1$ that is a non-extreme point of $H^\infty_1$, the closed unit ball of $H^\infty$. This means that
\[\int_\mathbb{T}\log(1-|b|)\operatorname{d}\!m >-\infty,\]
and there exists a unique outer function $a$ satisfying $a(0)>0$ and
\[|a|^2+|b|^2=1\quad m\text{-a.e. on}\,\,\mathbb{T}.\]
 This function $a$ is defined by 
\[a(z):= \exp{\left(\int_\mathbb{T} \frac{\lambda+z}{\lambda-z}\log(1-|b(\lambda)|^2)^\frac{1}{2}\operatorname{d}\!m(\lambda)\right)}, \qquad z\in\mathbb{D}.\]
We call $a$ the (Pythagorean) mate of $b$, and we say that $(b,a)$ forms a (Pythagorean) pair. An application of the maximum principle for subharmonic functions yields that, if $(b,a)$ is a pair, then
\begin{equation} \label{E:pairinD}
    |b(z)|^2+|a(z)|^2 \leq 1 \qquad z\in\mathbb{D},
\end{equation}
see also \cite[Exercise 23.1.1]{hb2}. Given a function $b$ with $\|b\|_{H^\infty}=1$, we introduce the (boundary) spectrum of $b$, that is the set
\[\sigma(b):=\{\zeta\in\mathbb{T}\colon \liminf_{z\to\zeta}|b(z)|<1\}.\]
Notice that, by \eqref{E:pairinD}, we have that
\begin{equation} \label{E:TminussigmainZa}
    \mathbb{T}\setminus \sigma(b)=\{\zeta\in\mathbb{T}\colon \lim_{z\in\mathbb{D},z\to\zeta}|b(z)|=1\}\subseteq \{\zeta\in\mathbb{T}\colon \lim_{z\in\mathbb{D},z\to\zeta}a(z)=0\}.
\end{equation}
Notice also that, for a non-extreme function $b$, necessarily $m(\mathbb{T}\setminus\sigma(b))=0$, so that the spectrum has full Lebesgue measure.

Given $b\in H^\infty_1$, the operator
\[T_b\colon H^2\to H^2, f\mapsto bf\]
is bounded with operator norm $\|T_b\|_{H^2\to H^2}\leq 1$. In particular, the operator $I-T_bT_b^*$ is positive, and we define the de Branges--Rovnyak space $H(b)$ as the range of its square root, $H(b)=\operatorname{Ran}(I-T_bT_b^*)^\frac{1}{2}$. This is a Hilbert space with respect to the so-called \emph{range norm}, that is,
\[\|(I-T_bT_b^*)^\frac{1}{2}f\|_{H(b)}:=\|f\|_{H^2}.\]

This construction is rather abstract, there is no explicit formula that one can use to establish whether a function $f\in H^2$ belongs to a certain $H(b)$ space. This aspect is one of the features that makes this whole theory complicated, yet intriguing. An equivalent definition of these spaces is obtained within the context of the reproducing kernel Hilbert spaces: the space $H(b)$ is the RKHS associated to the kernel
\[k_w^b(z)=\frac{1-\overline{b(w)}b(z)}{1-\overline{w}z},\qquad z,w\in\mathbb{D}.\]
Again, the construction is rather abstract, as it is realized through the completion of the linear span of such kernels, but it provides explicit examples of element of $H(b)$: the kernels themselves. When the function $b$ is non-extreme, then the associated de Branges--Rovnyak space also contains all the polynomials and the function $b$ itself. In particular, $H(b)$ is a proper dense subset of $H^2$. Standard references on this subject are \cite{hb1, hb2, sarHb}.

Given a positive measure $\mu$ on $\mathbb{T}$, we recall that its
support $\operatorname{supp}(\mu)$ is the (closed) set of points
$\zeta\in\mathbb{T}$ such that every arc $\Delta$ containing $\zeta$
has positive measure $\mu(\Delta)>0$. We say that a positive measure $\mu$ on $\mathbb{T}$ is carried by a set $E\subseteq\mathbb{T}$, or that $E$ is a carrier for $\mu$, if $\mu(\mathbb{T}\setminus E)=0$.

We consider a finitely supported positive measure $\mu$ on $\mathbb{T}$, that is, a measure with $\operatorname{supp}(\mu)=\{\zeta_1,\ldots,\zeta_N\}$, with $\zeta_j\in\mathbb{T}, j=1,\ldots,N$. Without loss of generality, we can write
\begin{equation} \label{E:measurefin}
    \mu = \sum_{j=1}^N \delta_{\zeta_j},
  \end{equation} 
that is, we may assume that every atom has mass $1$, since different choices of
masses  give rise  to a weighted Dirichlet space that has the same
elements and an equivalent norm.  

Given a finite positive Borel measure $\mu$ on the unit circle $\mathbb{T}$, we define the \emph{potential}
\begin{equation} \label{E:potential}
    V_\mu \colon \mathbb{C}\ni z\mapsto \int_\mathbb{T} \frac{1}{|\lambda-z|^2}\operatorname{d}\!\mu(\lambda)\in[0,+\infty].
\end{equation}

Notice that, choosing $\mu$ as in \eqref{E:measurefin}, then
\[V_\mu(z)=\sum_{j=1}^N\frac{1}{|z-\zeta_j|^2},\qquad z\in\mathbb{C},\]
and it satisfies $V_\mu(z)=+\infty$ if and only if $ z\in\{\zeta_1,\ldots,\zeta_N\}$.

In \cite{costara2013}, the authors proved the following theorem.
\begin{theorem}[Costara--Ransford]\label{T:CostRans}
    Let $(b,a)$ be a pair such that $a$ is rational, and let $\mu$ be a finite positive measure on $\mathbb{T}$. Then $H(b)=\mathcal{D}_\mu$ if and only if the following conditions hold:
    \begin{itemize}
        \item the zeros of $a$ on $\mathbb{T}$ are all simple;
        \item the support of $\mu$ is exactly equal to this set of zeros;
        \item none of these zeros lie in the spectrum $\sigma(b_i)$, where $b_i$ is the inner factor of $b$.
    \end{itemize}
\end{theorem}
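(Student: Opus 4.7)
The plan is to derive explicit parametrizations of both $H(b)$ and $\mathcal{D}_\mu$ under the stated hypotheses and then compare them. On the Dirichlet side, the disintegration formula reduces $\mathcal{D}_\mu(f)<\infty$ to finiteness of each local Dirichlet integral $\mathcal{D}_{\zeta_j}(f)$, and the Richter--Sundberg formula characterizes the latter by the existence of a nontangential boundary value $f(\zeta_j)$ together with $D_{\zeta_j}f(z):=(f(z)-f(\zeta_j))/(z-\zeta_j)\in H^2$, with $\mathcal{D}_{\zeta_j}(f)=\|D_{\zeta_j}f\|_{H^2}^2$. Interpolating at $\zeta_1,\ldots,\zeta_N$ this translates into the factorization $f=q+ph$, where $p(z)=\prod_{j=1}^N(z-\zeta_j)$, $q$ is a polynomial of degree less than $N$, and $h\in H^2$. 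On the $H(b)$ side, Sarason's description in the non-extreme case says that $f\in H(b)$ if and only if there exists $g\in H^2$ with $T_{\bar b}f=T_{\bar a}g$, and then $\|f\|_{H(b)}^2=\|f\|_{H^2}^2+\|g\|_{H^2}^2$. The proof amounts to matching these two descriptions up to an equivalence of norms.

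For the sufficiency direction, the three hypotheses make the factorization $a=\tilde a\cdot p$ available, where $\tilde a$ is a rational outer function with no zeros on $\overline{\mathbb{D}}$, hence uniformly bounded above and below there. Multiplication by $\tilde a$ is then an isomorphism of $H^2$, so $aH^2=pH^2$ with equivalent norms. Starting from $f=q+ph\in\mathcal{D}_\mu$ I would produce $g\in H^2$ solving Sarason's equation by inverting $\tilde a$ and exploiting that $b_i$ extends analytically and unimodularly across each $\zeta_j$, which allows the inner factor of $b$ to be absorbed cleanly into the algebra. Reversing the construction gives the opposite inclusion, and the quantitative norm comparison follows from the uniform bounds on $\tilde a$ and $1/\tilde a$ on $\overline{\mathbb{D}}$.

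For the necessity direction, the closed graph theorem promotes equality of sets into equivalence of norms, and each of the three conditions is isolated by testing with suitable candidates. A zero of $a$ of order $m_j\geq 2$ at some $\zeta_j$ would force $f\in H(b)$ to satisfy an $m_j$-th order local cancellation at $\zeta_j$, strictly stronger than the first-order cancellation characterizing $\mathcal{D}_{\zeta_j}$. A boundary zero of $a$ outside $\operatorname{supp}(\mu)$, or a point of $\operatorname{supp}(\mu)$ at which $a$ does not vanish, similarly exhibits a function in one space but not the other. Finally, if some $\zeta_j$ were in $\sigma(b_i)$, the singular behaviour of $b_i$ at $\zeta_j$ would obstruct the algebraic passage between Sarason's equation and the factorization $f=q+ph$.

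The main obstacle is the careful local analysis near each $\zeta_j$, where one must transform the Sarason pair $(f,g)$ into the factorization pair $(q,h)$ and back, with comparable norms. Rationality of $a$ is essential at this step, since it makes $\tilde a=a/p$ an explicit bounded rational function with bounded inverse on $\overline{\mathbb{D}}$ and thereby underwrites the identification $aH^2=pH^2$. Pinpointing exactly how this rationality is used, and replacing it by weaker analytic conditions on the boundary behaviour of $a$, is precisely the starting point for the generalization developed in the remainder of the paper.
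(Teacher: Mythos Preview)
This theorem is not proved in the paper: it is quoted as the Costara--Ransford result and attributed to \cite{costara2013}, so there is no in-paper proof to compare your attempt against. The paper's own contribution is the generalization in Theorems~\ref{T:inclusionmuatom}--\ref{T:mainpolynomial}, which drop the rationality hypothesis on $a$.

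It is still worth contrasting your strategy with the methods the paper uses for those generalizations, since they cover the rational case as a special instance. You propose to build explicit parametrizations of both spaces (via the Richter--Sundberg description of $\mathcal D_\mu$ and Sarason's $T_{\bar b}f=T_{\bar a}g$ description of $H(b)$) and then match them by hand at each $\zeta_j$. The paper never touches Sarason's equation directly. Instead, the inclusions are obtained from the Ball--Kriete criterion (Theorem~\ref{T:embeddingBK}) together with the Corona theorem, and the necessary conditions are extracted by testing on the Cauchy--Szeg\H{o} kernels $c_w$: Lemma~\ref{L:normcw} converts the norm comparison into the two-sided bound $0<\inf_{\mathbb D}|a|^2V_\mu\le\sup_{\mathbb D}|a|^2V_\mu<\infty$, and Theorem~\ref{T:HbinDmnec} forces $\mu(\sigma(b))=0$. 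All three bullet points of the Costara--Ransford statement fall out of these analytic inequalities rather than from exhibiting individual test functions.

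Your outline is plausible in spirit and close to the concrete parametrization of \cite{concrete} and \cite{chacon2013}, but as written the necessity direction is largely a list of assertions: you do not produce the test functions you allude to, you do not explain why a general finite positive $\mu$ with $H(b)=\mathcal D_\mu$ must be finitely supported before you can invoke the representation $f=q+ph$, and the sentence about the inner factor $b_i$ ``extending analytically and unimodularly across each $\zeta_j$'' needs justification (this is exactly what the hypothesis $\zeta_j\notin\sigma(b_i)$ is meant to supply, but the passage from that spectral condition to the algebraic manipulation you want is not automatic).
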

In order to comply with the assumptions of this theorem, necessarily the measure~$\mu$ is finitely supported and $\operatorname{supp}(\mu)=\{\zeta\in\mathbb{T}\colon a(\zeta)=0\}$. 
\begin{remark} As a byproduct of Theorem \ref{T:CostRans}, in \cite{costara2013} the authors showed that $\mathcal{D}_\mu = H(b_\mu)$, where the function $b_\mu$ can be chosen as the non-extreme function of $H^\infty_1$ having for Pythagorean mate $a_\mu$ a polynomial with simple zeros in the atoms of~$\mu$. We can write 
 \begin{equation} \label{D:arational}
     a_\mu(z) = C\prod_{j=1}^N(z-\zeta_j), \qquad z\in\mathbb{D},
 \end{equation}
 where $C\in\mathbb{C}$ is an appropriate constant such that $a_\mu(0)>0$ and $\|a_\mu\|_{H^\infty}\leq 1.$ Also, in \cite{costara2013} the function $b_\mu$ is chosen to be a polynomial whose zeros lie outside the  disk~$\mathbb{D}$, as it is constructed starting from the function $a_\mu$ defined in \eqref{D:arational} using the classical Fejér-Riesz Theorem: given $\omega(e^{i\theta})=\sum_{j=-n}^n c_j e^{ij\theta}$ a non-zero trigonometric polynomial that assumes non-negative values for all $\theta\in[0,2\pi]$, there exists an analytic polynomial $p(z)=\sum_{j=0}^n a_jz^j$ with no zeros on $\mathbb{D}$ such that $\omega(e^{i\theta})=|p(e^{i\theta})|^2$ for every $\theta$. For a proof, see \cite[Theorem 27.19]{hb2}. In this context, $\omega=1-|a_\mu|^2$.
\end{remark}

From now on, we fix the notation $(b_\mu,a_\mu)$ for this special pair of polynomials that we just described. The choice of the pair $(b,a)$ that realizes $H(b)=\mathcal{D}_\mu$ is not unique. For example, in \cite{Sarason1997LocalDS} the equality $H(b)=\mathcal{D}_\zeta$ is realized with the pair $(b_\zeta,a_\zeta)$ given by
 \[b_\zeta(z) = \frac{(1-s_0)\overline{\zeta} z}{1-s_0\overline{\zeta} z}, \qquad a_\zeta(z) = \frac{(1-s_0)(1-\overline{\zeta}z)}{1-s_0\overline{\zeta}z}, \qquad z\in\mathbb{D},\]
 where 
 \[s_0=\frac{3-\sqrt{5}}{2}.\]
 
 In \cite{chacon2013}, it is shown that if $\mu$ is chosen as in \eqref{E:measurefin}, the space $\mathcal{D}_\mu$ has the explicit presentation
 \begin{equation}\label{E:Dmuexplicit}
     \mathcal{D}_\mu = \{a_\mu h + p \colon h\in H^2, p\,\,\text{polynomial of degree}\,\, N-1\}.
 \end{equation}
 
 In particular, proving that for a given $b$ we have that $H(b)=\mathcal{D}_\mu$ with such~$\mu$, we obtain an explicit description of the elements of $H(b)$. 
 
 We are ready to state our main original results. We provide two results concerning the separate embeddings $H(b)\hookrightarrow \mathcal{D}_\mu$ and $\mathcal{D}_\mu \hookrightarrow H(b)$, with
 $\mu$ finitely supported, and finally we deal with the full equality of sets. Here, by embedding we mean that we have a set inclusion, and that the inclusion map is bounded. Notice that, by the closed graph theorem, the set inclusion automatically ensures the boundedness
 required. 
 
\begin{theorem2} \label{T:inclusionmuatom}
     Let $b$ be a non-extreme function in $H^\infty_1$, $a$ its Pythagorean mate, and $\mu$ an atomic measure as in \eqref{E:measurefin}. The following are equivalent.
     \begin{enumerate}[label={\rm (\Alph*)}]
         \item \label{T:inclusionmuatom1} The embedding $H(b) \hookrightarrow \mathcal{D}_\mu$ holds.
         \item \label{T:inclusionmuatom2} $\{\zeta_1,\ldots,\zeta_N\} \subseteq \mathbb{T} \setminus \sigma(b)$ and $\sup_{\mathbb{D}} |a|^2 V_\mu < \infty$.
          \item \label{T:inclusionmuatom4} $\{ \zeta_1, \ldots, \zeta_N \}\subseteq \mathbb{T} \setminus \sigma(b)$ and there exists $g\in H^\infty$ such that $a$ has the form 
         \begin{equation*}
             a=\left(\prod_{j=1}^N (z-\zeta_j)\right) g.
         \end{equation*}
     \end{enumerate}
 \end{theorem2}

Notice that, necessarily, the function $a$ has to vanish on all the points $\zeta_1,\ldots,\zeta_N$, to compensate for $V_\mu$. We move to the reverse inclusion $\mathcal{D}_\mu \hookrightarrow H(b)$.  
 \begin{theorem2} \label{T:rev-emb}
     Let $b$ be a non-extreme function in $H^\infty_1$, $a$ its Pythagorean mate, and $\mu$ an atomic measure as in \eqref{E:measurefin}. The following are equivalent.
     \begin{enumerate}[label={\rm (\Alph*)}]
         \item \label{T:rev-emb1} The embedding $\mathcal{D}_\mu\hookrightarrow H(b)$ holds.
          \item \label{T:rev-emb4} There exists $g\in \operatorname{Hol}(\mathbb{D})$ with $\inf_\mathbb{D}|g|>0$ such that $a$ has the form 
         \begin{equation*}
             a=\left(\prod_{j=1}^N (z-\zeta_j)\right) g.
         \end{equation*}
         \item \label{T:rev-emb2}  $\inf_{\mathbb{D}} |a|^2 V_\mu >0$.
     \end{enumerate}
 \end{theorem2}

Notice that, in this other case, the function $a$ cannot vanish outside of the set $\{ \zeta_1, \ldots, \zeta_N\}$, since $V_\mu$ is always strictly positive. Also, if we  multiply the function~$b$ for an inner factor $u$, that is, if we consider $\tilde{b}:=bu$, then $(\tilde{b},a)$ still forms a Pythagorean pair, and $\tilde{b}$ satisfies the assumptions of Theorem \ref{T:rev-emb} if and only if $b$ does. This is very natural, in light of the decomposition \cite[Theorem 18.7]{hb2}
\[
  H(bu)=H(b)+bH(u) \supseteq H(b).
\] As for the first embedding result, Theorem $\ref{T:inclusionmuatom}$, notice that the condition $\{\zeta_1,\ldots,\zeta_N\}\subseteq\mathbb{T}\setminus\sigma(b)$ is not stable under multiplication of $b$ by inner functions. 

\begin{theorem2} \label{T:equalitymuatom}
     Let $b$ be a non-extreme function in $H^\infty_1$, $a$ its Pythagorean mate, and $\mu$ an atomic measure as in \eqref{E:measurefin}. The following are equivalent.
      \begin{enumerate}[label={\rm (\Alph*)}]
         \item \label{T:eqmuatom1} The equality $H(b) = \mathcal{D}_\mu$ holds.
         \item \label{T:eqmuatom2} $\{\zeta_1,\ldots,\zeta_N\}= \mathbb{T} \setminus \sigma(b)$ and $0<\inf_{\mathbb{D}}|a|^2 V_\mu\leq\sup_{\mathbb{D}} |a|^2 V_\mu < \infty$.
          \item \label{T:eqmuatom4} $\{ \zeta_1, \ldots, \zeta_N \}= \mathbb{T} \setminus \sigma(b)$ and there exists $g\in H^\infty$ with $\inf_\mathbb{D}|g|>0$ such that $a$ has the form 
         \begin{equation*}
             a=\left(\prod_{j=1}^N (z-\zeta_j)\right) g.
         \end{equation*}
     \end{enumerate}
 \end{theorem2}

The explicit expression of $a$ in \ref{T:eqmuatom4} of Theorem \ref{T:equalitymuatom} generalizes the result of Costara--Ransford. For a general $a\in H^\infty$ that is not necessarily a rational function, a priori it makes no sense to talk about the order of zeros in boundary points $\zeta\in\mathbb{T}$. However, saying that 
\[a(z)=g(z)\prod_{j=1}^N(z-\zeta_j), \qquad z\in\mathbb{D},\]
with $g\in H^\infty$ and $\inf_\mathbb{D}|g|>0$, clearly resembles the condition that $a$ has zeros of order one in the points $\zeta_1,\ldots,\zeta_N$. 

Finally, we extend Theorem \ref{T:equalitymuatom}, allowing a higher multiplicity for the zeros of~$a$. In this case, we will no longer obtain a harmonically weighted Dirichlet space. We now describe what will be the model for such de Branges--Rovnyak spaces. Together with the points $\zeta_1,\ldots,\zeta_N\in\mathbb{T}$, we fix integers $m_1,\ldots,m_N \geq 1$. Let 
\[p_a(z)=C\prod_{j=1}^N(z-\zeta_j)^{m_j},\]
with $C\in\mathbb{C}$ such that $p_a(0)>0$ and $\|p_a\|_{H^\infty}=1$. Now, by the Fejer-Riesz Theorem, let $p_b$ be an analytic polynomial such that $1-|p_a|^2=|p_b|^2$ on $\mathbb{T}$. In particular, $\|p_b\|_{H^\infty}=1$ and 
\[\sigma(p_b)=\mathbb{T}\setminus \{\zeta_1,\ldots,\zeta_N\}.\]
Also, by \cite{concrete}, the de Branges--Rovnyak space $H(p_b)$ has the explicit description
\[H(p_b)=\{p_ah+ q\colon h\in H^2,\, q\,\,\text{polynomial of degree}\,\leq M\},\]
$M$ being the sum $\sum_{j=1}^N m_j$. 

\begin{theorem2} \label{T:mainpolynomial}
    Let $(b,a)$ be a pair and $(p_b,p_a)$ as above. Then, $H(b)=H(p_b)$ if and only if $\sigma(b)=\mathbb{T}\setminus \{\zeta_1,\ldots,\zeta_N\}$ and there exists a function $g\in H^\infty$ such that $\inf_{\mathbb{D}}|g|>0$ and $a=p_a g$.
\end{theorem2}

Whenever $(b,a)$ is a pair that satisfies the assumption of the previous theorem, that is, there exist points $\zeta_1,\ldots,\zeta_N\in\mathbb{T}$ and positive natural numbers $m_1,\ldots,m_N$ such that $\{\zeta_1,\ldots,\zeta_N\}=\mathbb{T}\setminus\sigma(b)$ and $a=p_ag$, for $g\in H^\infty$ with $\inf_\mathbb{D}|g|>0$, we say that $(b,a)$ is a pair of polynomial-type.

\section{Proofs of the main results} \label{S:results}
We begin by listing some properties of the potential $V_\mu$ introduced in \eqref{E:potential} that we will use in this work. 
    \begin{itemize}
        \item $V_\mu$ is lower-semicontinuous on $\mathbb{C}$ and continuous on $\mathbb{C}\setminus\text{supp}(\mu)$;
        \item For $z\in\mathbb{C}$, it holds
        \begin{equation*}
            \frac{\mu(\mathbb{T})}{(1+|z|)^2} \leq V_\mu(z) \leq \frac{\mu(\mathbb{T})}{\operatorname{dist}(z,\text{supp}(\mu))^2};
        \end{equation*}
        \item $V_\mu = \infty$ $\mu$-a.e. on $\mathbb{T}$.
    \end{itemize}

The following lemma is a reformulation of Lemmas 3.4 and 3.5 of \cite{costara2013}.
\begin{lemma} \label{L:normcw}
    Let 
    \[c_w(z)= \frac{1}{1-\overline{w}z}, \qquad w,z\in\mathbb{D},\]
    be the Cauchy--Szeg\"o kernel. Then, if $(b,a)$ is a pair,
    \begin{equation}
        \|c_w\|_{H(b)}^2 = \frac{1+|b(w)/a(w)|^2}{1-|w|^2},
    \end{equation}
    and if $\mu$ is a finite positive Borel measure on $\mathbb T$, then
    \begin{equation}
        \|c_w\|_{\mathcal{D}_\mu}^2 =\frac{1+|w|^2V_\mu(w)}{1-|w|^2}.
    \end{equation}
\end{lemma}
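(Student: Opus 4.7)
The two identities are independent of each other and each reduces to a direct calculation. For the $H(b)$-identity I would apply Sarason's characterization of $H(b)$ via the Pythagorean pair, while for the $\mathcal{D}_\mu$-identity I would combine the disintegration formula with the Richter--Sundberg boundary representation of the local Dirichlet integral.

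To prove the first identity, recall that for a non-extreme pair $(b,a)$ Sarason's theorem asserts that $f\in H^2$ belongs to $H(b)$ if and only if there exists a (necessarily unique) $g\in H^2$ with $T_{\bar b}f=T_{\bar a}g$, and in that case $\|f\|_{H(b)}^2=\|f\|_{H^2}^2+\|g\|_{H^2}^2$. Applied to $f=c_w$, I would use the standard eigenvector identity $T_{\bar\varphi}c_w=T_\varphi^{\ast}c_w=\overline{\varphi(w)}\,c_w$ for $\varphi\in H^\infty$, with $\varphi=b$ and $\varphi=a$, to reduce the Sarason equation to $\overline{a(w)}\,g=\overline{b(w)}\,c_w$. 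Since $a$ is outer and $w\in\mathbb{D}$, one has $a(w)\ne 0$, so that $g=\overline{b(w)/a(w)}\,c_w$ is the unique solution in $H^2$. Combining this with $\|c_w\|_{H^2}^2=1/(1-|w|^2)$ gives the asserted $H(b)$-norm identity.

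For the second identity, the plan is to split $\|c_w\|_{\mathcal{D}_\mu}^2=\|c_w\|_{H^2}^2+\mathcal{D}_\mu(c_w)$ and to evaluate the Dirichlet integral using the disintegration formula $\mathcal{D}_\mu(f)=\int_{\mathbb{T}}\mathcal{D}_\zeta(f)\,d\mu(\zeta)$. For the local integral I would invoke the Richter--Sundberg boundary representation
\[
\mathcal{D}_\zeta(f)=\int_{\mathbb{T}}\left|\frac{f(\eta)-f(\zeta)}{\eta-\zeta}\right|^2 dm(\eta),
\]
which is legitimate for $f=c_w$ since $c_w$ extends continuously to $\overline{\mathbb{D}}$. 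A short algebraic manipulation reduces the difference quotient to $\bar w/[(1-\bar w\eta)(1-\bar w\zeta)]$; integrating in $\eta$ and using $\|c_w\|_{H^2}^2=1/(1-|w|^2)$ together with $|1-\bar w\zeta|=|\zeta-w|$ on the circle yields $\mathcal{D}_\zeta(c_w)=|w|^2/[|\zeta-w|^2(1-|w|^2)]$. Integrating this against $\mu$ gives $\mathcal{D}_\mu(c_w)=|w|^2V_\mu(w)/(1-|w|^2)$, whence the second identity. No step poses a genuine obstacle; the only points requiring care are the hypotheses underlying Sarason's characterization and the Richter--Sundberg representation, both standard facts from the monographs \cite{hb1,hb2,sarHb} already cited.
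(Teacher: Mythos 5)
Your proof is correct. The paper itself gives no argument for this lemma --- it is stated as a reformulation of Lemmas 3.4 and 3.5 of Costara--Ransford and simply cited --- and your two computations (Sarason's $T_{\bar b}f=T_{\bar a}f^{+}$ characterization with the eigenvector identity $T_{\bar\varphi}c_w=\overline{\varphi(w)}c_w$ for the $H(b)$-norm, and the disintegration formula combined with the Richter--Sundberg local Douglas formula for the $\mathcal{D}_\mu$-norm) are exactly the standard route taken in that reference, so there is nothing to reconcile.
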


 Being that $\mathcal{D}_\mu = H(b_\mu)$, to study the equalities $\mathcal{D}_\mu = H(b)$ for a general $b$ we can appeal to the  following result from Ball and Kriete \cite{ballkriete}; see also \cite[Theorem 27.15]{hb2}.
 
 \begin{theorem} \label{T:embeddingBK}
     Let $b_2, b_1$ be functions in the closed unit ball of $H^\infty$, with $b_1$ non-extreme. Then, $H(b_2) \subseteq H(b_1)$ if and only if the two following conditions hold:
         \begin{enumerate}[label={\rm (\roman*)}]
             \item \label{E:embeddingBKa} There exist $v,w\in H^\infty$ 
 such that $b_1 +v a_1 = b_2 w$;
             \item \label{E:embeddingBKb} There exists $\gamma>0$ such that $1-|b_2|^2\leq \gamma(1-|b_1|^2)$ a.e. on $\mathbb{T}$.
         \end{enumerate}
 \end{theorem}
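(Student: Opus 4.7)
I will prove each of the two inclusions $H(p_b)\subseteq H(b)$ and $H(b)\subseteq H(p_b)$ separately, combining the explicit description of $H(p_b)$ recalled above with the Ball--Kriete criterion (Theorem~\ref{T:embeddingBK}).

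\textbf{Necessity.} Suppose $H(b)=H(p_b)$, which is an equality with equivalent norms by the closed graph theorem. Theorem~\ref{T:embeddingBK}\ref{E:embeddingBKb} applied in both directions, together with the boundary identities $1-|b|^2=|a|^2$ and $1-|p_b|^2=|p_a|^2$ on $\mathbb{T}$, yields $|a|^2\asymp|p_a|^2$ a.e.\ on $\mathbb{T}$. Since $a$ is outer (as a Pythagorean mate) and $p_a$ is outer (all of its zeros lie on $\mathbb{T}$), the quotient $g:=a/p_a$ is holomorphic and nonvanishing in $\mathbb{D}$; applying the Poisson integral representation to $\log|g|$ and $\log|1/g|$ transports the two-sided bounds on $|g|$ from $\mathbb{T}$ into $\mathbb{D}$, giving $g\in H^\infty$ and $\inf_\mathbb{D}|g|>0$, i.e., $a=p_a g$. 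The inclusion $\mathbb{T}\setminus\sigma(b)\subseteq\{\zeta_1,\ldots,\zeta_N\}$ then follows from $a=p_a g$ and \eqref{E:TminussigmainZa}. For the reverse inclusion, the explicit form $H(p_b)=\{p_a h+q\}$ shows that $H(p_b)$ admits bounded point evaluation at each $\zeta_j$ (since $p_a(\zeta_j)=0$ reduces $f(\zeta_j)$ to $q(\zeta_j)$ for a polynomial $q$ of degree $\le M$), and this property transfers to $H(b)$ by the norm equivalence, implying $\zeta_j\notin\sigma(b)$.

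\textbf{Sufficiency.} Assume the stated conditions. The inclusion $H(p_b)\subseteq H(b)$ is almost immediate: since $g,1/g\in H^\infty$, multiplication by $g$ is a bijection of $H^2$, so $aH^2=p_a H^2$; combining this with the standard containment $aH^2\subseteq H(b)$ (from the contractivity of $T_a\colon H^2\to H(b)$) and the fact that polynomials lie in $H(b)$ for non-extreme $b$, the decomposition $H(p_b)=p_a H^2+\mathcal{P}_M$ yields $H(p_b)\subseteq H(b)$. The reverse inclusion $H(b)\subseteq H(p_b)$ I establish through Ball--Kriete: condition~\ref{E:embeddingBKb} follows from $|a|^2=|p_a|^2|g|^2$ with $|g|\leq\|g\|_\infty$, while condition~\ref{E:embeddingBKa} requires producing $v,w\in H^\infty$ with $p_b+v\,p_a=bw$.

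\textbf{Main obstacle.} The principal difficulty is Ball--Kriete's condition~\ref{E:embeddingBKa} for the embedding $H(b)\hookrightarrow H(p_b)$, namely constructing $v,w\in H^\infty$ with $p_b+v\,p_a=bw$. The naive ansatz $v=0$, $w=p_b/b$ fails because $b$ may have zeros inside $\mathbb{D}$ absent from $p_b$; one must instead choose $v$ so that $p_b+v\,p_a$ vanishes at each zero $\alpha$ of $b$ in $\mathbb{D}$ (forcing $v(\alpha)=-p_b(\alpha)/p_a(\alpha)$), and then verify that the resulting $w=(p_b+v\,p_a)/b$ lies in $H^\infty$. Solvability of this Nevanlinna--Pick-type interpolation problem, combined with uniform control of $w$ near the points $\zeta_j$---where the spectrum hypothesis ensures $|b|\to 1$ at a rate comparable to that of $|p_b|$ via the identity $|b|^2=1-|p_a|^2|g|^2$---is the core technical step, and it is here that both the spectrum equality $\sigma(b)=\mathbb{T}\setminus\{\zeta_j\}$ and the strict positivity $\inf|g|>0$ enter essentially.
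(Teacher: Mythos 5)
Your proposal does not address the statement you were asked to prove. The statement is Theorem~\ref{T:embeddingBK}, the Ball--Kriete criterion characterizing when $H(b_2)\subseteq H(b_1)$ in terms of the factorization condition $b_1+va_1=b_2w$ and the boundary inequality $1-|b_2|^2\leq\gamma(1-|b_1|^2)$. What you have written is instead a proof sketch of Theorem~\ref{T:mainpolynomial} (the characterization of when $H(b)=H(p_b)$ for a pair of polynomial type). Worse, your argument repeatedly \emph{invokes} Theorem~\ref{T:embeddingBK} as a black box --- in the necessity part to get $|a|^2\asymp|p_a|^2$ on $\mathbb{T}$, and in the sufficiency part to reduce the inclusion $H(b)\subseteq H(p_b)$ to conditions \ref{E:embeddingBKa} and \ref{E:embeddingBKb} --- so as a purported proof of that very theorem it is circular from the outset.

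For the record, the paper does not prove Theorem~\ref{T:embeddingBK} either: it is quoted from Ball and Kriete \cite{ballkriete} (see also \cite[Theorem 27.15]{hb2}) and used as an external tool, exactly as you use it. If you want to actually prove it, you need an entirely different argument, working from the operator-theoretic definition $H(b)=\operatorname{Ran}(I-T_bT_b^*)^{1/2}$ or from the reproducing-kernel characterization of containment of $H(b)$ spaces (positivity of the difference of kernels up to a multiplicative constant), and translating that positivity into the two stated conditions; none of the ingredients you deploy (the explicit description of $H(p_b)$, the Corona-type interpolation for $v,w$, the Smirnov maximum principle for $a/p_a$) are relevant to that task. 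Separately, even read as a proof of Theorem~\ref{T:mainpolynomial}, your ``main obstacle'' paragraph concedes that the key step --- producing $v,w\in H^\infty$ with $p_b+vp_a=bw$ --- is not carried out; the paper handles this by observing that $b$ and $p_a$ form a Corona pair (since $|b|\to1$ wherever $p_a$ vanishes, by the spectrum hypothesis) and applying the Corona theorem directly, with no Nevanlinna--Pick interpolation needed.
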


To verify condition \ref{E:embeddingBKa}, we will often invoke the
Corona theorem. The result that we will use is the following: given
two functions $f_1,f_2\in H^\infty$ such that
$\inf_{\mathbb{D}}(|f_1|+|f_2|)>0$, there exist two functions
$g_1,g_2\in H^\infty$ such that $f_1g_1+f_2g_2=1$. We will say that
$f_1,f_2$ form a Corona pair. For further reference and the full
statement of the Corona theorem, see \cite[Theorem 2.1]{garnett}.

The following result is a generalization of Theorem 1.4 in \cite{bellavita2023embedding}, and it is a necessary condition for the set inclusion $H(b)\subseteq\mathcal{D}_\mu$, which is of interest in its own right. Notice that this applies to general measures and functions $b$ not necessarily non-extreme.

\begin{theorem}\label{T:HbinDmnec}
Let $\mu$ be a finite positive Borel measure on $\mathbb{T}$ and let $b\in H^\infty_1$. If the embedding $H(b) \hookrightarrow \mathcal{D}_\mu$ holds, then $\mu\big(\sigma(b)\big)=0$.
\end{theorem}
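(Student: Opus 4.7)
The argument goes by contradiction: assume $H(b)\hookrightarrow\mathcal{D}_\mu$ with embedding norm $M$ and $\mu(\sigma(b))>0$, and produce a sequence in $H(b)$ that violates the embedding. The test functions I would use are the Cauchy--Szeg\"o kernels $c_w$ when $b$ is non-extreme (where Lemma~\ref{L:normcw} supplies both norms explicitly), and the reproducing kernels $k_w^b\in H(b)$ in general.

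In the non-extreme case, Lemma~\ref{L:normcw} turns $\|c_w\|_{\mathcal{D}_\mu}^2\le M^2\|c_w\|_{H(b)}^2$ into
\[
1+|w|^2 V_\mu(w)\le M^2\Bigl(1+\tfrac{|b(w)|^2}{|a(w)|^2}\Bigr), \qquad w\in\mathbb{D}.
\]
Multiplying through by $|a(w)|^2$ and invoking $|a(w)|^2+|b(w)|^2\le 1$ from~\eqref{E:pairinD} yields the pointwise estimate $|w|^2\,|a(w)|^2\,V_\mu(w)\le M^2$ on $\mathbb{D}$. Since $V_\mu=+\infty$ $\mu$-a.e.\ and $\mu(\sigma(b))>0$, I can pick $\zeta_0\in\sigma(b)$ with $V_\mu(\zeta_0)=+\infty$; the lower semicontinuity of $V_\mu$ then forces $V_\mu(z)\to+\infty$ as $z\to\zeta_0$, and the displayed estimate forces $a(z)\to 0$ in the unrestricted sense at $\zeta_0$. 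On the other hand, $\zeta_0\in\sigma(b)$ supplies a sequence $z_n\to\zeta_0$ with $|b(z_n)|\le c<1$; a Fatou-type boundary analysis of the outer function $a$, together with the relation $|a|^2+|b|^2=1$ a.e.\ on $\mathbb{T}$, then produces the desired incompatibility with $|b(z_n)|$ being bounded away from~$1$.

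For general $b\in H^\infty_1$ (possibly extreme, so that no Pythagorean mate $a$ is available and $c_w$ need not lie in $H(b)$), I would instead test with $k_w^b$, using $\|k_w^b\|_{H(b)}^2=(1-|b(w)|^2)/(1-|w|^2)$ and the disintegration $\mathcal{D}_\mu(f)=\int_{\mathbb{T}}\mathcal{D}_\tau(f)\,d\mu(\tau)$. The product rule for difference quotients gives
\[
D_\tau k_w^b(\eta)=(1-\overline{b(w)}b(\tau))\,D_\tau c_w(\eta)-\overline{b(w)}\,c_w(\eta)\,D_\tau b(\eta),
\]
whose principal Cauchy-type summand contributes an $H^2$-norm squared equal to $|w|^2|1-\overline{b(w)}b(\tau)|^2/(|\tau-w|^2(1-|w|^2))$. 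Integrating against $\mu$ and comparing with the upper bound $\mathcal{D}_\mu(k_w^b)\le M^2(1-|b(w)|^2)/(1-|w|^2)$, then specializing to $w_n\to\zeta_0\in\sigma(b)$ with $|b(w_n)|\le c<1$, yields a uniform bound on $V_\mu$ restricted to a neighborhood of $\zeta_0$, contradicting $V_\mu(w_n)\to+\infty$. The main obstacle is twofold: in the general case, controlling the cross term $\overline{b(w)}c_w\,D_\tau b$ cleanly (likely via Cauchy--Schwarz together with $|b|\le 1$ a.e.) so that the principal term dominates after integration against $\mu$; and in the non-extreme case, converting the interior estimate ``$a(z)\to 0$ unrestrictedly at $\zeta_0$'' into an honest boundary contradiction with $\zeta_0\in\sigma(b)$, which is the genuinely delicate step.
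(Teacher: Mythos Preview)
Your second strategy---testing the embedding with the reproducing kernels $k_w^b$ and comparing $\mathcal{D}_\mu(k_w^b)$ with $\|k_w^b\|_{H(b)}^2$---is exactly the paper's route, and the paper uses it uniformly for all $b\in H^\infty_1$ (it never passes through $c_w$, even in the non-extreme case). The obstacle you single out, namely controlling the cross term in the difference-quotient decomposition of $k_w^b$, is indeed the technical heart. The paper bypasses it by quoting from \cite{bellavita2023embedding} a ready-made pointwise lower bound
\[
\mathcal{D}_\zeta(k_w^b)\;\ge\;\|k_w^b\|_{H(b)}^2\,\frac{(1-|b(w)|)^2\,(|w|-|b(w)|)^2}{|\zeta-w|^2\,(1-|b(w)|^2)},
\]
which already absorbs the cross term. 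Integrating in $\zeta$ against $\mu$ and dividing by $\|k_w^b\|_{H(b)}^2$ gives a bound of the form $C\ge c(w)\,V_\mu(w)$ with $c(w)$ bounded below along any sequence $w_n\to\lambda\in\sigma(b)$ with $|b(w_n)|\to\beta<1$; Fatou's lemma (equivalently, lower semicontinuity of $V_\mu$) then yields $V_\mu(\lambda)<\infty$ for \emph{every} $\lambda\in\sigma(b)$, and the conclusion follows from $V_\mu=\infty$ $\mu$-a.e. So your outline is correct once the cited lower bound is inserted; without it, the cross-term control you flag remains unresolved.

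Your non-extreme detour via $c_w$, on the other hand, has a genuine gap at exactly the step you label ``genuinely delicate''. From $|w|^2|a(w)|^2V_\mu(w)\le M^2$ and $V_\mu(z)\to\infty$ you correctly get $a(z)\to 0$ unrestrictedly at $\zeta_0$. But on $\mathbb{D}$ one only has the \emph{inequality} $|a|^2+|b|^2\le 1$, so $a(z_n)\to 0$ together with $|b(z_n)|\le c<1$ is not a contradiction at the level of pointwise values. Turning the interior information $a(z)\to 0$ into a boundary statement strong enough to clash with $\zeta_0\in\sigma(b)$ would require a separate argument about Poisson integrals of $\log(1-|b|^2)^{1/2}$ that you have not supplied, and in any case the $k_w^b$ route makes the detour unnecessary.
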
 

\begin{proof} 
For the proof, we recall the potential $V_\mu\colon\mathbb{C}\to[0,+\infty]$ defined in \eqref{E:potential}. We show that $V_\mu$ is finite on the boundary spectrum $\sigma(b)$, which we can assume to be non-empty without loss of generality. Let $C>0$ be a constant such that
\[\mathcal{D}_\mu(f) \leq C\|f\|_{H(b)}^2, \qquad f\in H(b).\]
Let $\lambda\in\sigma(b)$ and let us consider a sequence $(w_n)_n$ in $\mathbb{D}$ such that $|b(w_n)| \to \beta\in [0,1)$ as $n\to\infty$. As it was done in \cite{bellavita2023embedding}, we have that
\begin{align*}
    C\|k_{w_n}^b\|_{H(b)}^2&\geq    \mathcal{D}_\mu(k_{w_n}^b)=  \int_{\mathbb{T}} \mathcal{D}_\zeta(k_{w_n}^b) \,  \operatorname{d}\!\mu(\zeta) \\
    &\geq   \int_{\mathbb{T}} \|k_{w_n}^b\|_{H(b)}^2 \frac{\big(1-|b(w_n)|\big)^2 \big(|w_n| - |b(w_n)|\big)^2}{|\zeta-w_n|^2 \big(1-|b(w_n)|^2\big)} \, \operatorname{d}\!\mu(\zeta) \\
    &=   \|k_{w_n}^b\|^2_{H(b)} \frac{\big(1-|b(w_n)|\big)\big(|w_n| - |b(w_n)|\big)^2}{1+|b(w_n)|} \int_{\mathbb{T}} \frac{1}{|\zeta -w_n|^2}  \, \operatorname{d}\!\mu(\zeta) .
\end{align*}
Hence, by Fatou's Lemma, it holds that 
\begin{align*}
    C &\geq \liminf_{n} \frac{\big(1-|b(w_n)|\big)\big(|w_n| - |b(w_n)|\big)^2}{1+|b(w_n)|} \int_{\mathbb{T}} \frac{1}{|\zeta -w_n|^2}  \, \operatorname{d}\!\mu(\zeta) \\
    &\geq \frac{(1-\beta)^3}{1+\beta} \int_{\mathbb{T}} \frac{1}{|\zeta -\lambda|^2}  \, \operatorname{d}\!\mu(\zeta) \\
    &= \frac{(1-\beta)^3}{1+\beta} V_\mu(\lambda).
\end{align*}
In particular,
\[V_\mu(\lambda) \leq C \frac{1+\beta}{(1-\beta)^3} <\infty, \qquad \lambda\in\sigma(b).\]
The theorem follows from the fact that $V_\mu = \infty$ $\mu$-a.e. on $\mathbb{T}$ and therefore, necessarily, we have that $\mu\big(\sigma(b)\big)=0$.
\end{proof}

    We remark that, if there exists an $\varepsilon>0$ such that for every $\lambda\in\sigma(b)$ we have $\beta<1-\varepsilon$, then the potential $V_\mu$ is uniformly bounded on $\sigma(b)$. This happens, for example, when $b$ is inner. In this case, we can always choose $\beta=0$. In general, however, $V_\mu$ can be unbounded on $\sigma(b)$. As an example, take the usual function~$b_1$ as in \eqref{E:b1} that realizes the equality $H(b_1)=\mathcal{D}_1$. Clearly, the embedding holds, but the potential associated to the measure $\mu=\delta_1$ is
    \[V_\mu(z) = \frac{1}{|z-1|^2},\]
    which is not bounded on $\sigma(b)=\mathbb{T}\setminus\{1\}.$  

 We are now ready to prove the main theorems.

 \begin{proof}[Proof of Theorem \ref{T:inclusionmuatom}]
     We show that \ref{T:inclusionmuatom1} implies \ref{T:inclusionmuatom2}. Since
     \[\mathcal{D}_\mu= \bigcap_{j=1}^N\mathcal{D}_{\zeta_j},\] 
     then by Theorem \ref{T:HbinDmnec},  necessarily $\zeta_j\notin\sigma(b)$ for each $j=1,\ldots,N$, proving that $\{\zeta_1,\ldots,\zeta_N\} \subseteq \mathbb{T} \setminus \sigma(b)$.
     Now, by the boundedness of the embedding and from Lemma \ref{L:normcw}, it follows that
     \[1+|w|^2V_\mu(w)\lesssim \frac{|a(w)|^2+|b(w)|^2}{|a(w)|^2}, \qquad w\in\mathbb{D}.\]
     In particular, $\sup_{\mathbb{D}} |a|^2 V_\mu < \infty$. To see this, we consider two cases. If $|w|\geq 1/2,$ then 
     \[|a(w)|^2V_\mu(w)\leq 4|a(w)|^2(1+|w|^2V_\mu(w))\lesssim |a(w)|^2+|b(w)|^2 \leq 2. \]
     
     If $|w|\leq 1/2,$ then
     \[|a(w)|^2V_\mu(w)\leq \int_{\mathbb{T}}\frac{1}{|w-\zeta|^2}\operatorname{d}\!\mu(\zeta) \leq \int_{\mathbb{T}}\frac{1}{(1-|w|)^2}\operatorname{d}\!\mu(\zeta)\leq 4\mu(\mathbb{T}). \]

The implication \ref{T:inclusionmuatom2} $\implies$ \ref{T:inclusionmuatom4} is trivial, as the function
     \[g(z):=\frac{a(z)}{\prod_{j=1}^N(z - \zeta_j)}, \qquad z\in\mathbb{D},\]
     belongs to $H^\infty$ by assumption.
     
     The last implication \ref{T:inclusionmuatom4} $\implies$ \ref{T:inclusionmuatom1} is more convoluted, and makes use of Theorem~\ref{T:embeddingBK}. We recall that $\mathcal{D}_\mu = H(b_\mu)$, where the pair $(b_\mu,a_\mu)$ is described in \eqref{D:arational}. Condition \ref{E:embeddingBKb} of Theorem \ref{T:embeddingBK} is
     $1-|b|^2 \leq \gamma(1-|b_\mu|^2),$ a.e. on $\mathbb{T}$, and this follows by assumption, since $a=a_\mu g$ with $g\in H^\infty$. On the other hand, we also have that $\{\zeta_1,\ldots,\zeta_N\}\subseteq \mathbb{T}\setminus\sigma(b),$ meaning that 
     \[\lim_{z\in\mathbb{D},z\to\zeta_j} |b(z)|=1\]
     for every $j=1,\ldots,N$. By the definition \eqref{D:arational} of $a_\mu$, it follows that $b$ and $a_\mu$ form a Corona pair. An application of the Corona Theorem proves that condition \ref{E:embeddingBKa} holds. We conclude that $H(b) \subseteq H(b_\mu) = \mathcal{D}_\mu.$
 \end{proof}

Notice that, in particular, when $H(b)\hookrightarrow\mathcal{D}_\mu$ with $\mu$ as in \eqref{E:measurefin},
\[\{\zeta_1,\ldots,\zeta_N\}\subseteq\{\zeta\in\mathbb{T}\colon \lim_{z\to\zeta}|a(z)|=0\}.\]

As an application of this theorem to the special case of one atom, $N=1$, we obtain a complete characterization for the embedding in the local Dirichlet space $H(b) \hookrightarrow \mathcal{D}_\zeta$, completing the analysis done in \cite{bellavita2023embedding}, for a non-extreme $b$.

\begin{corollary}
    Let $b$ be a non-extreme function in $H^\infty_1$, $a$ its Pythagorean mate, and $\zeta\in\mathbb{T}$. Then, it holds the embedding $H(b)\hookrightarrow\mathcal{D}_\zeta$ if and only if  $\zeta\notin\sigma(b)$ and there exists $g\in H^\infty$ such that $a=(z-\zeta)g$. 
\end{corollary}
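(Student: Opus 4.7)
The plan is to obtain the corollary as a direct specialization of Theorem \ref{T:inclusionmuatom} to the case $N = 1$. With a single atom at $\zeta_1 = \zeta \in \mathbb{T}$, the measure in \eqref{E:measurefin} becomes $\mu = \delta_\zeta$, and hence $\mathcal{D}_\mu = \mathcal{D}_\zeta$, the local Dirichlet space at $\zeta$. Both the hypothesis that $b$ is non-extreme in $H^\infty_1$ and the fact that $a$ is its Pythagorean mate match the standing assumptions of Theorem \ref{T:inclusionmuatom}, so the theorem applies verbatim.

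Invoking the equivalence between \ref{T:inclusionmuatom1} and \ref{T:inclusionmuatom4} in Theorem \ref{T:inclusionmuatom}, the embedding $H(b) \hookrightarrow \mathcal{D}_\zeta$ is equivalent to the conjunction of $\{\zeta\} \subseteq \mathbb{T} \setminus \sigma(b)$—that is, $\zeta \notin \sigma(b)$—and the existence of $g \in H^\infty$ such that
\[
a(z) = (z - \zeta)\, g(z), \qquad z \in \mathbb{D}.
\]
This is exactly the statement of the corollary.

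Since the argument is a direct specialization of an already-proved theorem, there is no substantive obstacle; the only thing worth pointing out is that the normalization in \eqref{E:measurefin} assigns mass $1$ to each atom, but this is harmless because the corollary concerns the embedding as sets, and different positive masses produce the same space with an equivalent norm. Thus no separate argument is needed, and the corollary follows immediately.
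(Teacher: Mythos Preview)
Your proposal is correct and matches the paper's own treatment exactly: the corollary is stated there as an immediate application of Theorem~\ref{T:inclusionmuatom} in the special case $N=1$, with no separate proof given. Your observation about the mass normalization is a reasonable remark, though not strictly needed since the paper already fixes mass $1$ in \eqref{E:measurefin}.
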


Next, we prove Theorem \ref{T:rev-emb}.

 \begin{proof}[Proof of Theorem \ref{T:rev-emb}]
     We show that \ref{T:rev-emb1} implies \ref{T:rev-emb4}. If $\mathcal{D}_\mu \subseteq \ H(b),$ then in particular by \eqref{E:Dmuexplicit} we have that $a_\mu H^2\subseteq H(b)$. By \cite[Theorem 23.6]{hb2}, this implies that the quotient $h:=a_\mu/a\in H^\infty$. Writing $a_\mu=ah$, since both $a,a_\mu$ are outer functions, we have that $h$ is outer as well. In particular, setting $g:= 1/h$, we obtain \ref{T:rev-emb4}. 

      The implication \ref{T:rev-emb4} $\implies$ \ref{T:rev-emb2} is straightforward. 
      
      To conclude, we show that \ref{T:rev-emb2} implies \ref{T:rev-emb1}. We assume that $\inf_{\mathbb{D}} |a|^2 V_\mu >0$, and using the identity $\mathcal{D}_\mu=H(b_\mu)$ we apply Theorem \ref{T:embeddingBK} to prove that $H(b_\mu)\subseteq H(b).$ Condition \ref{E:embeddingBKb} follows at once from the assumption. To prove condition \ref{E:embeddingBKa}, we show that $(b_\mu,a)$ forms a Corona pair. By assumption,
      \[\inf_{z\in\mathbb{D}} |a(z)|^2V_\mu(z) =\inf_{z\in\mathbb{D}}\sum_{j=1}^N \frac{|a(z)|^2}{|z-\zeta_j|^2}>0.\]
      In particular, for every $\lambda\in\mathbb{T}\setminus\{\zeta_1,\ldots,\zeta_N\}$, we have that $\liminf_{z\to\lambda}|a(z)|>0.$ On the other hand, since $(b_\mu,a_\mu)$ is a pair of two polynomials, the relation $|b|^2+|a|^2=1$ holds on all the unit circle $\mathbb{T}$, and in particular for every $j=1,\ldots,N$ we have that $\lim_{z\to\zeta_j} |b_\mu(z)|=1.$ This shows that $\inf_{\mathbb{D}}(|a_\mu|+|b|)>0$, concluding the proof.
 \end{proof}

Notice that, in particular, when $\mathcal{D}_\mu\hookrightarrow H(b)$ with $\mu$ as in \eqref{E:measurefin},
\begin{equation} \label{E:setinequality}
\{ \zeta\in\mathbb{T} \colon \liminf_{z\to\zeta} |a(z)|=0\}\subseteq\{\zeta_1,\ldots,\zeta_N\}.    
\end{equation}

We are now in a position to prove the full equality result.

\begin{proof}[Proof of Theorem \ref{T:equalitymuatom}]
    We assume that $H(b) = \mathcal{D}_\mu$, and we show that $\ref{T:eqmuatom2}$ holds. By Theorems \ref{T:inclusionmuatom} and \ref{T:rev-emb}, we only have to show that $\mathbb{T}\setminus\sigma(b)\subseteq\{\zeta_1,\ldots,\zeta_N\}$. This follows from the fact that
    \[\mathbb{T}\setminus\sigma(b)\subseteq\{\zeta\in\mathbb{T}\colon \lim_{\mathbb{D}\ni z\to\zeta}a(z)=0\},\]
    for $|a|^2+|b|^2\leq 1$ on $\mathbb{D}$, and \eqref{E:setinequality}.
    The rest of the proof follows at once from Theorems \ref{T:inclusionmuatom} and \ref{T:rev-emb}.
\end{proof}

Notice that, in particular, when $H(b)=\mathcal{D}_\mu$ with $\mu$ as in \eqref{E:measurefin}, then
\begin{equation*}  
\{\zeta_1,\ldots,\zeta_N\}=\{\zeta\in\mathbb{T} \colon \liminf_{z\to\zeta} |a(z)|=0\}=\{\zeta\in\mathbb{T} \colon \lim_{z\to\zeta} |a(z)|=0\}=\mathbb{T}\setminus\sigma(b).
\end{equation*}

Before proving Theorem \ref{T:mainpolynomial}, we recall the Smirnov class, defined as
    \[\mathcal{N}_+:=\big\{h_1/h_2\colon h_1\in H^\infty, h_2\in H^\infty\setminus\{0\}\,\,\text{outer}\,\big\}.\]

Such functions have the following important property, often referred to as the Smirnov maximum principle: if $f\in\mathcal{N}_+$ and 
 \[\int_\mathbb{T} |f|^2 \operatorname{d}\!m <\infty,\]
 then $f\in H^2$. Also, if the boundary function $f$ belongs to $L^\infty$, then $f\in H^\infty$.

\begin{proof}[Proof of Theorem \ref{T:mainpolynomial}]
    We start assuming that $H(b)=H(p_b)$. By condition \ref{E:embeddingBKb} of Theorem \ref{T:embeddingBK}, we have that both $a/p_a$ and $p_a/a$ belong to $L^\infty$, and then by Smirnov's maximum principle we conclude that there exists a function $g\in H^\infty$ such that $\inf_{\mathbb{D}}|g|>0$ and $a=p_a g$ on $\mathbb{D}$. By \eqref{E:TminussigmainZa}, this also shows that 
    \[\mathbb{T}\setminus\sigma(b)\subseteq \{\zeta_1,\ldots,\zeta_N\}.\]
    Finally, by Theorem \ref{T:inclusionmuatom}, we have that $H(b)=H(p_b)\subseteq \mathcal{D}_\mu$, $\mu$ being the positive measure
    \[\mu=\sum_{j=1}^N\delta_{\zeta_j},\]
    and then by Theorem \ref{T:HbinDmnec} we have that  \[ \{\zeta_1,\ldots,\zeta_N\}\subseteq\mathbb{T}\setminus\sigma(b).\]
    To deal with the reverse implication, we apply Theorem \ref{T:embeddingBK}. Condition \ref{E:embeddingBKb} is clear for both the inclusions $H(b)\subseteq H(p_b)$ and $H(p_b)\subseteq H(b)$. To prove that $H(b)\subseteq H(p_b)$, it is enough to show that $b,p_a$ form a Corona pair, and this follows from the formula of $p_a$ and the assumption that $\sigma(b)=\mathbb{T}\setminus \{\zeta_1,\ldots,\zeta_N\}$: whenever $a$ vanishes, the modulus of $b$ is $1$. To prove that $H(p_b)\subseteq H(b)$, it is enough to show that $p_b,a$ form a Corona pair, and this follows from the assumption on $a$ and the fact that $|p_b(\zeta_j)|=1$ for every $j=1,\ldots,N$. We conclude that $H(b)=H(p_b)$.
\end{proof}

\section{Examples and further results} \label{S:examples}

Under the assumptions of Theorem \ref{T:mainpolynomial}, when $(b,a)$ forms a pair of polynomial-type, we also have the following information on the Clark measures $\mu_\lambda$ associated to $b$, for $\lambda\in\mathbb{T}$. We recall that the Clark measure $\mu_\lambda$ is the unique finite positive measure on $\mathbb{T}$ such that 
\[\frac{1-|b(z)|^2}{|\lambda-b(z)|^2} = \int_{\mathbb{T}}\frac{1-|z|^2}{|z-\zeta|^2}\operatorname{d}\!\mu_\lambda(\zeta), \qquad z\in\mathbb{D}.\]
See \cite[Section 9]{cauchytransform} for details. The measure $\mu_\lambda$ has the Radon-Nykodym decomposition
\[\operatorname{d}\!\mu_\lambda = \frac{1-|b|^2}{|\lambda-b|^2}\operatorname{d}\!m + \operatorname{d}\!\sigma_{\lambda},\]
and the singular part $\sigma_\lambda$ is carried by the set
\begin{equation}\label{E:carrier}
    E_\lambda=\{\eta\in\mathbb{T}\colon \lim_{r\to 1}b(r\eta)=\lambda\}.
\end{equation}

\begin{proposition}
Let $(b,a)$ be a pair of polynomial-type with $\sigma(b)=\mathbb{T}\setminus\{\zeta_1,\ldots,\zeta_N\}$ and $\lambda\in\mathbb{T}$. Then, the Clark measure of $b$ associated to $\lambda$, $\mu_\lambda$, is absolutely continuous if and only if $\lambda\notin\{b(\zeta_1),\ldots,b(\zeta_N)\},$ where the value $b(\zeta_j)$ is well-defined and intended as the non-tangential limit for each $j=1,\ldots,N$.
\end{proposition}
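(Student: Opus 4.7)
The plan is to use the Radon-Nikodym decomposition
\[d\mu_\lambda = \frac{1-|b|^2}{|\lambda-b|^2}\,dm + d\sigma_\lambda\]
together with the fact that $\sigma_\lambda$ is carried by the set $E_\lambda$ in \eqref{E:carrier}. The proposition then reduces to the statement: $\sigma_\lambda = 0$ if and only if $\lambda \notin \{b(\zeta_1), \ldots, b(\zeta_N)\}$.

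The first step is to identify $E_\lambda$ precisely. I claim $E_\lambda \subseteq \{\zeta_1, \ldots, \zeta_N\}$: if $\eta \in E_\lambda$, then $b(r\eta) \to \lambda \in \mathbb{T}$, so $|b(r\eta)| \to 1$ and, by $|a|^2+|b|^2 \leq 1$, also $|a(r\eta)| \to 0$. Writing $a = p_a g$ with $\inf_{\mathbb{D}}|g| > 0$ and $p_a$ a polynomial whose zeros on $\mathbb{T}$ are exactly the $\zeta_j$'s, we see that for $\eta \notin \{\zeta_j\}$ the quantity $|p_a(r\eta)|$ stays bounded away from $0$, forcing $|a(r\eta)|$ to stay bounded away from $0$, a contradiction. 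Existence of the non-tangential limit $b(\zeta_j)$ at each $\zeta_j$ follows from the inner-outer factorization $b = b_ib_o$: the polynomial-type condition excludes Blaschke zeros and singular inner mass of $b$ at $\zeta_j$, while $\log|b|$ extends continuously to $\zeta_j$ on $\mathbb{T}$ because $|a|$ does, so both factors admit non-tangential limits of modulus one at $\zeta_j$. Hence
\[E_\lambda = \{\zeta_j : b(\zeta_j) = \lambda\}.\]

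The direction $(\Leftarrow)$ is now immediate: if $\lambda \notin \{b(\zeta_j)\}$, then $E_\lambda = \emptyset$, so $\sigma_\lambda = 0$ and $\mu_\lambda$ is absolutely continuous. For the converse, assume $\lambda = b(\zeta_{j_0})$. Since $\sigma_\lambda$ is carried by the finite set $E_\lambda$, it is purely atomic, and the Aleksandrov-Clark atom formula gives $\sigma_\lambda(\{\zeta_{j_0}\}) = 1/|b'(\zeta_{j_0})|$ provided $b$ has a finite Carath\'eodory angular derivative at $\zeta_{j_0}$.

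The main obstacle is verifying this finite angular derivative. I would argue as follows: on $\mathbb{T}$, $|a(\eta)|^2 = |p_a(\eta)|^2 |g(\eta)|^2$ vanishes like $|\eta - \zeta_{j_0}|^{2m_{j_0}}$ near $\zeta_{j_0}$, since $|g|$ is bounded above and below; consequently $\log|b(\eta)| = \tfrac{1}{2}\log(1 - |a(\eta)|^2)$ vanishes to the same order. A standard Poisson-integral estimate then yields $\log|b_o(r\zeta_{j_0})| = O(1 - r)$ for the outer factor $b_o$, while the inner factor of $b$ has a finite angular derivative at $\zeta_{j_0}$ (its Blaschke zeros and singular support stay away from $\zeta_{j_0}$, thanks to the polynomial-type hypothesis, so Frostman-type criteria apply). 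Combining, $1 - |b(r\zeta_{j_0})| = O(1 - r)$, and Julia-Carath\'eodory gives a finite non-zero angular derivative at $\zeta_{j_0}$. Therefore $\sigma_\lambda(\{\zeta_{j_0}\}) > 0$, and $\mu_\lambda$ fails to be absolutely continuous.
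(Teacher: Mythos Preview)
Your argument is correct and reaches the same endpoint as the paper---the Carath\'eodory angular derivative at each $\zeta_j$, hence an atom of $\mu_{b(\zeta_j)}$ at $\zeta_j$---but the route is genuinely different. The paper obtains the angular derivative almost for free from its own machinery: since a polynomial-type pair satisfies $H(b)=H(p_b)\subseteq\mathcal D_\mu$ with $\mu=\sum_j\delta_{\zeta_j}$, every $f\in H(b)$ has a non-tangential limit at each $\zeta_j$, and by \cite[Theorem~21.1]{hb2} this is equivalent to $b$ having an angular derivative in the sense of Carath\'eodory there; then \cite[Theorem~21.5]{hb2} gives the atom. You instead prove the angular derivative directly: the polynomial-type structure forces $|\log|b(\zeta)||\lesssim|\zeta-\zeta_j|^{2m_j}$ on $\mathbb T$, a Poisson-integral estimate yields $1-|b_o(r\zeta_j)|=O(1-r)$, the inner factor is analytic across $\zeta_j$ since $\zeta_j\notin\sigma(b)$, and Julia--Carath\'eodory finishes. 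Your approach is more self-contained and does not invoke the embedding $H(b)\subseteq\mathcal D_\mu$; the paper's is shorter because the heavy lifting has already been done in Theorems~1--4.

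One small point to tighten: your early justification that $b_o$ has a non-tangential limit at $\zeta_j$ ``because $\log|b|$ extends continuously'' is not quite enough as stated---mere continuity of the boundary data at a point does not guarantee existence of the conjugate function there. What actually makes this work is the quantitative vanishing $|\log|b(\zeta)||\lesssim|\zeta-\zeta_j|^{2}$, which controls the singular kernel. In any case your later Julia--Carath\'eodory argument independently delivers the existence of the non-tangential limit $b(\zeta_j)$, so the proof stands; you might simply establish the angular derivative first and then read off both the existence of $b(\zeta_j)$ and the identification of $E_\lambda$ from it.
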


\begin{proof}
    Since $H(b)\subseteq\mathcal{D}_\mu$, with $\mu=\sum_{j=1}^N\delta_{\zeta_j}$, we have that every function $f\in H(b)$ admits non-tangential limit at $\zeta_j$ for every $j=1,\ldots,N$, by \cite[Theorem 7.2.1]{primer}. This is equivalent to saying that the function $b$ admits angular derivative in the sense of Caratheodory in every $\zeta_j, j=1,\ldots,N$ (see \cite[Theorem 21.1]{hb2}, and in particular the non-tangential limits
\[b(\zeta_j)=\angle\lim_{z\to\zeta_j} b(z)\]
exist and belong to $\mathbb{T}$ for every $j=1,\ldots,N$. It follows that the Clark measure~$\mu_{b(\zeta_j)}$ has an atom at $\zeta_j$, for every $j=1,\ldots,N$, by \cite[Theorem 21.5]{hb2}. In particular, for every $j=1,\ldots,N$, the measure $\mu_{b(\zeta_j)}$ is not absolutely continuous. Now, consider $\lambda\notin\{ b(\zeta_1), \ldots, b(\zeta_N) \}$. The key point is to show that the set $E_\lambda$ defined in \eqref{E:carrier}, that is a carrier for the singular part of $\mu_\lambda$, is empty. We prove that the value $\lambda$ is never attained as a radial limit by $b$, and thus the measure~$\mu_\lambda$ is absolutely continuous: we show that for every $\eta\in\mathbb{T}$ fixed we have that $\inf_{0<r<1}|\lambda-b(r\eta)|>0$. Let us argue by contradiction, and let us consider a point $\eta\in\mathbb{T}$ and a sequence $(r_n)_n$ in $(0,1)$ such that $|\lambda-b(r_n\eta)|\leq 1/n$ for every $n$. By the inequality $|a(z)|^2\leq1-|b(z)|^2,z\in\mathbb{D}$,  necessarily $\lim_{n}|a(r_n\eta)|=0$. By the assumptions on $a$, we must have that $\eta=\zeta_j$ for some $j=1,\ldots,N$, but then $\lambda = \lim_n b(r_n \eta) = b(\zeta_j)$, producing a contradiction. The result is now proved.
\end{proof}

Finally, we move to the applications of the main theorems. We recall that, when $H(b) \hookrightarrow \mathcal{D}_\mu$, then for every $j=1,\ldots,N$ the limit $\lim_{z\to\zeta_j}a(z)$ exists and it is $0$. Then, the quantity 
\[|a(z)|^2V_\mu(z)=\sum_{j=1}^N \bigg|\frac{a(z)-a(\zeta_j)}{z-\zeta_j}\bigg|^2\]
is related to the difference quotient of $a$ in the points $\zeta_1,\ldots,\zeta_N$. This is another interpretation of the interplay between the vanishing of $a$ in the points $\zeta_1,\ldots,\zeta_N$ and the fact that $V_\mu(\zeta_j)=+\infty$ for each $j=1,\ldots,N$.

\begin{theorem}
    Let $b\in H^\infty_1$ non-extreme with $\{\zeta_1,\ldots,\zeta_N\}\subseteq \mathbb{T}\setminus\sigma(b)$. Then, if for every $j=1,\ldots,N$ the limit
    \[\lim_{\mathbb{D}\ni z\to\zeta_j}\frac{a(z)-a(\zeta_j)}{z-\zeta_j}\]
    exists, we have that $H(b)\hookrightarrow\mathcal{D}_\mu$. Also, under the same assumptions, we have that  $H(b)=\mathcal{D}_\mu$ if and only if $|a'(\zeta_j)|>0$ for every $j=1,\ldots,N$.
\end{theorem}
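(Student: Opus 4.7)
The plan is to apply Theorems \ref{T:inclusionmuatom} and \ref{T:equalitymuatom} via the identity
\[|a(z)|^2 V_\mu(z) = \sum_{j=1}^N \bigg|\frac{a(z) - a(\zeta_j)}{z - \zeta_j}\bigg|^2\]
displayed just before the theorem, together with the fact that $a(\zeta_j) = 0$ for each $j$. This last vanishing is forced by $\zeta_j \in \mathbb{T}\setminus\sigma(b)$: since $|b(z)| \to 1$ and $|a|^2 + |b|^2 \leq 1$ on $\mathbb{D}$, by \eqref{E:TminussigmainZa} we get $\lim_{\mathbb{D}\ni z \to \zeta_j} a(z) = 0$, so the difference quotient in the hypothesis is simply $a(z)/(z-\zeta_j)$.

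For the embedding part, I would invoke condition \ref{T:inclusionmuatom2} of Theorem \ref{T:inclusionmuatom}: the requirement $\{\zeta_1,\ldots,\zeta_N\} \subseteq \mathbb{T}\setminus\sigma(b)$ is given, so it suffices to verify $\sup_{\mathbb{D}}|a|^2 V_\mu < \infty$. By hypothesis each summand $(a(z) - a(\zeta_j))/(z-\zeta_j)$ admits a finite limit at $\zeta_j$, hence is bounded on some $\mathbb{D}$-neighborhood of $\zeta_j$; outside the union of these neighborhoods $\min_j|z - \zeta_j|$ is bounded below while $a \in H^\infty$, so each summand is bounded there as well. Combining the two regimes yields a uniform bound on the whole of $\mathbb{D}$.

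For the equivalence in the second part, I would first handle necessity of $|a'(\zeta_j)| > 0$. Assuming $H(b) = \mathcal{D}_\mu$, Theorem \ref{T:equalitymuatom} gives $\inf_{\mathbb{D}}|a|^2 V_\mu > 0$. Passing to the limit $z \to \zeta_j$ in the identity, the $k$-th summand with $k \neq j$ tends to $|a(\zeta_j)|^2/|\zeta_j - \zeta_k|^2 = 0$, while the $j$-th summand tends to $|a'(\zeta_j)|^2$; thus $|a'(\zeta_j)|^2 \geq \inf_{\mathbb{D}}|a|^2 V_\mu > 0$. For sufficiency, assuming $|a'(\zeta_j)| > 0$ for every $j$, the same identity yields $|a|^2 V_\mu \geq \tfrac{1}{2}|a'(\zeta_j)|^2$ on a suitable neighborhood of each $\zeta_j$; together with the basic estimate $V_\mu(z) \geq \mu(\mathbb{T})/(1+|z|)^2$ and the fact that $|a|$ is bounded below on any region avoiding the boundary zeros of $a$, this delivers $\inf_{\mathbb{D}}|a|^2 V_\mu > 0$, and then Theorem \ref{T:equalitymuatom} gives $H(b) = \mathcal{D}_\mu$.

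The main obstacle in the sufficiency direction is that $|a'(\zeta_j)| > 0$ is purely local information at the $\zeta_j$'s and does not by itself prevent $a$ from vanishing at some $\eta \in (\mathbb{T}\setminus\sigma(b)) \setminus \{\zeta_1,\ldots,\zeta_N\}$, at which $V_\mu(\eta)$ would be finite and $|a|^2 V_\mu \to 0$, destroying the infimum. So the argument must invoke — implicitly or explicitly — the identification $\mathbb{T}\setminus\sigma(b) = \{\zeta_1,\ldots,\zeta_N\}$ demanded by Theorem \ref{T:equalitymuatom}; this is the only subtle point of the proof, and is consistent with reading the statement as a characterization of when the equality $H(b)=\mathcal{D}_\mu$ can hold at all in the present framework.
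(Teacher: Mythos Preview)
Your approach is exactly the paper's: the proof there reads, in full, ``The proof is a trivial consequence of Theorems \ref{T:inclusionmuatom}, \ref{T:rev-emb}, \ref{T:equalitymuatom}.'' You have supplied the details the paper omits, and your arguments for the embedding and for the necessity of $|a'(\zeta_j)|>0$ are correct.

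Your final paragraph is well taken and worth emphasizing. The sufficiency direction, as the statement is written (with $\{\zeta_1,\dots,\zeta_N\}\subseteq\mathbb{T}\setminus\sigma(b)$ rather than equality), is genuinely problematic: for instance, if $(b,a)$ is the polynomial pair with $a(z)=C(z-1)(z+1)$ and one takes $N=1$, $\mu=\delta_1$, then the difference quotient at $\zeta_1=1$ exists with $|a'(1)|=2|C|>0$, yet $H(b)=\mathcal{D}_{\delta_1+\delta_{-1}}\neq\mathcal{D}_{\delta_1}$. So the ``if'' part can only hold once one also knows $\mathbb{T}\setminus\sigma(b)=\{\zeta_1,\dots,\zeta_N\}$, and even then your claim that ``$|a|$ is bounded below on any region avoiding the boundary zeros of $a$'' needs justification --- the local information $|a'(\zeta_j)|>0$ only gives $\lim_{z\to\zeta_j}|g(z)|>0$ for $g=a/\prod_j(z-\zeta_j)$, not $\inf_{\mathbb{D}}|g|>0$. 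The paper's one-line proof does not address this, so you are being more careful than the source, not less.
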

\begin{proof}
    The proof is a trivial consequence of Theorems \ref{T:inclusionmuatom}, \ref{T:rev-emb}, \ref{T:equalitymuatom}.
\end{proof}
We provide a more concrete application of Theorem \ref{T:equalitymuatom}. In general, $b$ is given, whereas $a$ is not explicit. However, all of the results that we mentioned rely on the function $a$. Under reasonable assumptions on the function $b$, we are able to produce a theorem that no longer revolves around $a$.

\begin{theorem} \label{T:applicationbcontinuous}
    Let $b\in H^\infty_1\cap C(\overline{\mathbb{D}})$ be a non-extreme function that is continuous up to $\mathbb{T}$. Then, $H(b)=\mathcal{D}_\mu$ if and only if $\mathbb{T}\setminus\sigma(b)=\{\zeta_1,\ldots,\zeta_N\}$ and 
    \[0<\inf_{\lambda\in\sigma(b)}\frac{1-|b(\lambda)|^2}{\prod_{j=1}^N|\lambda-\zeta_j|^2}\leq \sup_{\lambda\in\sigma(b)}\frac{1-|b(\lambda)|^2}{\prod_{j=1}^N|\lambda-\zeta_j|^2} <\infty.\]
\end{theorem}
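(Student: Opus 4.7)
The plan is to reduce everything to Theorem~\ref{T:equalitymuatom} via the factorization $a=pg$ with $p(z)=\prod_{j=1}^N(z-\zeta_j)$. The continuity of $b$ on $\overline{\mathbb{D}}$ allows us to pass between boundary values taken a.e.\ on $\mathbb{T}$ and pointwise values on the open set $\sigma(b)$. The key identity throughout is the Pythagorean relation $|a|^2+|b|^2=1$ a.e.\ on $\mathbb{T}$, which rewrites as $|g(\lambda)|^2=(1-|b(\lambda)|^2)/\prod_{j=1}^N|\lambda-\zeta_j|^2$ a.e., and directly links the factorization of $a$ to the quantity appearing in the statement.

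For the forward direction, I would apply the third equivalent condition of Theorem~\ref{T:equalitymuatom} to obtain $a=pg$ with $g\in H^\infty$ and $\inf_\mathbb{D}|g|>0$; the latter implies $1/g\in H^\infty$, so both $|g|$ and $1/|g|$ lie in $L^\infty(\mathbb{T})$. The hypothesis $b\in C(\overline{\mathbb{D}})$ makes the ratio $(1-|b|^2)/\prod_{j=1}^N|\lambda-\zeta_j|^2$ a continuous, strictly positive function on $\sigma(b)=\mathbb{T}\setminus\{\zeta_1,\ldots,\zeta_N\}$, which is an open set of full Lebesgue measure in $\mathbb{T}$. A short continuity argument---if this continuous function exceeded $\|g\|_{H^\infty}^2$ at some point of $\sigma(b)$, it would do so on a neighborhood of positive Lebesgue measure, contradicting the a.e.\ equality with $|g|^2$, and symmetrically for the lower bound---then yields the claimed sup and inf bounds on $\sigma(b)$.

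For the reverse direction, I would define $g:=a/p$; since $p$ has no zeros in $\mathbb{D}$, $g$ is holomorphic on $\mathbb{D}$, and since $p$ is a polynomial (in particular an outer $H^\infty$ function), $g\in\mathcal{N}_+$. The upper bound in the hypothesis, combined with the boundary identity above, shows that $|g|^2$ is essentially bounded on $\mathbb{T}$, so Smirnov's maximum principle delivers $g\in H^\infty$. The same reasoning applied to $1/g=p/a$---which belongs to $\mathcal{N}_+$ because $a$ is outer---together with the lower bound in the hypothesis, gives $1/g\in H^\infty$, and hence $\inf_\mathbb{D}|g|>0$. Combined with the hypothesis $\mathbb{T}\setminus\sigma(b)=\{\zeta_1,\ldots,\zeta_N\}$, this is exactly the factorization condition of Theorem~\ref{T:equalitymuatom}, which yields $H(b)=\mathcal{D}_\mu$.

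The main delicate point is the forward direction's upgrade from ``bounded a.e.\ on $\mathbb{T}$'' to ``bounded pointwise on $\sigma(b)$''; the assumption $b\in C(\overline{\mathbb{D}})$ is exactly what rules out isolated ``bad'' points of $\sigma(b)$ where the ratio could blow up or vanish in a way invisible to the essential bounds on $|g|$.
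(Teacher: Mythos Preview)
Your proposal is correct and follows essentially the same approach as the paper: reduce to Theorem~\ref{T:equalitymuatom} and use Smirnov's maximum principle to pass between the factorization $a=pg$ with $g,1/g\in H^\infty$ and the sup/inf conditions on $\sigma(b)$. The only difference is a technical nuance in bridging ``a.e.\ on $\mathbb{T}$'' with ``pointwise on $\sigma(b)$'': the paper invokes Poisson-kernel continuity to show that $|a|^2+|b|^2=1$ holds at \emph{every} point of $\sigma(b)$ (so that $|g|^2$ literally equals the ratio there), whereas you instead use the continuity of the ratio $(1-|b|^2)/|p|^2$ on the open full-measure set $\sigma(b)$ to upgrade the a.e.\ bounds on $|g|^2$ to pointwise ones---both routes are valid and equally short.
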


\begin{proof}
The key point is that, under continuity assumptions for the function $b$, we have clearer information on the Pythagorean mate $a$ and the relation $|a|^2+|b|^2=1$ $m$-a.e. on $\mathbb{T}$ strengthens. First of all, notice that
\[\sigma(b)=\{\zeta\in\mathbb{T}\colon |b(\zeta)|<1\}.\]
By the definition of $a$ we have that
\[|a(z)|=\exp{\left(\int_\mathbb{T} \frac{1-|z|^2}{|\lambda-z|^2}\log(1-|b(\lambda)|^2)^\frac{1}{2}\operatorname{d}\!m(\lambda)\right)},\qquad z\in\mathbb{D}.\]
In particular, since the boundary function $\log(1-|b(\lambda)|^2)^\frac{1}{2}$ is continuous at every $\lambda\in\sigma(b)$, by the properties of the Poisson kernel\footnote{See for example \cite[Theorem 1.2]{Duren}.} we have that the limit $\lim_{r\to 1}|a(r\lambda)|$ exists for every $\lambda\in\sigma(b)$ and it is equal to $\sqrt{1-|b(\lambda)|^2}$. Then, we have that 
\[|a(\lambda)|^2+|b(\lambda)|^2=1,\qquad\lambda\in\sigma(b),\]
and we recall that for a non-extreme function $b$ the spectrum $\sigma(b)$ has full Lebesgue measure. Also, since by \eqref{E:TminussigmainZa} 
 \begin{equation*} 
    \mathbb{T}\setminus \sigma(b) \subseteq \{\zeta\in\mathbb{T}\colon \lim_{z\in\mathbb{D},z\to\zeta}a(z)=0\},
\end{equation*}
we conclude that $|a|=\sqrt{1-|b|^2}$ everywhere on $\mathbb{T}$. In particular, it vanishes exactly on the set $\mathbb{T}\setminus\sigma(b)=\{\zeta_1,\ldots,\zeta_N\}.$

With this in mind, an application of Theorem \ref{T:equalitymuatom} concludes the proof: to check whether $a/p_\mu$ and $p_\mu/a$ belong to $H^\infty$, by the Smirnov maximum principle it suffices to check whether these quotients of outer functions belong to $L^\infty$, and it is enough to do so on $\sigma(b)$. 
\end{proof}

Notice that similar results can be easily obtained to study the embeddings $H(b)\hookrightarrow\mathcal{D}_\mu$ and $\mathcal{D}_\mu\hookrightarrow H(b)$, or even the equality $H(b)=H(p_b)$, when we allow a higher multiplicity for the zeros of $a$. Also, the hypothesis that $b$ is continuous up to the boundary can be weakened: what we really use is that $|b|$ is continuous on every $\lambda\in\sigma(b)$. Also, in general, multiplying $b$ by inner functions whose spectrum is disjoint from $\{\zeta_1,\ldots,\zeta_n\}$ does not disturb the identity $H(b)=\mathcal{D}_\mu$. 

We conclude with the example that was mentioned in the Introduction: when $b$ is the exponential function $e^{z^{N}-1}$, the associated $H(b)$ space is a Dirichlet space.

\begin{theorem} \label{T:Hbexp}
    Let $b(z)=e^{z^N-1}$, $\{\xi_1,\ldots,\xi_N\}$ be the set of the $N$-th roots of $1$ and $\mu$ the measure
    $\mu=\sum_{j=1}^N \delta_{\xi_j}.$ Then $H(b)$ coincides with the Dirichlet space $\mathcal{D}_\mu$.
\end{theorem}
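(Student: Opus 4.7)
The plan is to apply Theorem \ref{T:applicationbcontinuous} to $b(z)=e^{z^N-1}$, which is entire and in particular continuous on $\overline{\mathbb{D}}$. There are four things to verify: that $b\in H^\infty_1$, that $b$ is non-extreme, that $\mathbb{T}\setminus\sigma(b)=\{\xi_1,\ldots,\xi_N\}$, and the two-sided bound on the quotient appearing in the statement of Theorem \ref{T:applicationbcontinuous}.

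First I would compute $|b(e^{i\theta})|^2=e^{2(\cos(N\theta)-1)}$. Since $\cos(N\theta)\leq 1$ with equality precisely at the $N$-th roots of unity, we get $\|b\|_{H^\infty}=1$ and the spectrum is
\[\sigma(b)=\{\lambda\in\mathbb{T}\colon |b(\lambda)|<1\}=\mathbb{T}\setminus\{\xi_1,\ldots,\xi_N\}.\]
To check non-extremality, near each $\xi_j$ the boundary modulus satisfies $1-|b(e^{i\theta})|^2\sim N^2(\theta-\theta_j)^2$, so $\log(1-|b|^2)$ is logarithmically integrable on $\mathbb{T}$, hence $\int_\mathbb{T}\log(1-|b|)\,dm>-\infty$.

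The heart of the proof is the verification of the two-sided estimate. Using the factorization $\lambda^N-1=\prod_{j=1}^N(\lambda-\xi_j)$, for $\lambda=e^{i\theta}\in\mathbb{T}$ one has
\[\prod_{j=1}^N|\lambda-\xi_j|^2=|\lambda^N-1|^2=2(1-\cos(N\theta)),\]
while $1-|b(\lambda)|^2=1-e^{-2(1-\cos(N\theta))}$. Setting $u=2(1-\cos(N\theta))\in[0,4]$, the quotient appearing in Theorem \ref{T:applicationbcontinuous} becomes
\[\frac{1-|b(\lambda)|^2}{\prod_{j=1}^N|\lambda-\xi_j|^2}=\frac{1-e^{-u}}{u}.\]
The function $u\mapsto(1-e^{-u})/u$ extends continuously to $u=0$ with value $1$, is strictly decreasing on $[0,\infty)$, and is strictly positive everywhere; thus on the compact interval $[0,4]$ it is bounded above by $1$ and below by $(1-e^{-4})/4>0$. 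This gives exactly the inequalities required by Theorem \ref{T:applicationbcontinuous}, so $H(b)=\mathcal{D}_\mu$.

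I do not anticipate a serious obstacle: the key structural identity $\prod(\lambda-\xi_j)=\lambda^N-1$ turns the problem into the elementary boundedness of $(1-e^{-u})/u$, and the continuity and non-extremality of $b$ are immediate. The only mildly delicate point is checking non-extremality at each $\xi_j$, which reduces to noting that $\log|\theta-\theta_j|$ is locally integrable.
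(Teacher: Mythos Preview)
Your proof is correct and follows the same overall strategy as the paper: apply Theorem~\ref{T:applicationbcontinuous} after checking that $b$ is continuous, non-extreme, has $\|b\|_{H^\infty}=1$, and that $\mathbb{T}\setminus\sigma(b)=\{\xi_1,\ldots,\xi_N\}$.

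The only difference is in how you verify the two-sided bound on the quotient. The paper argues locally: near each $\xi_k=e^{2k\pi i/N}$ it estimates
\[
\frac{1-|b(\lambda)|^2}{\prod_{j=1}^N|\lambda-\xi_j|^2}\ \asymp\ \frac{1-e^{2\cos(N\theta)-2}}{2-2\cos\!\big(\theta-\tfrac{2k\pi}{N}\big)}\ \sim\ N^2
\]
as $\theta\to 2k\pi/N$, and then uses continuity away from the roots. You instead exploit the global factorization $\lambda^N-1=\prod_{j=1}^N(\lambda-\xi_j)$ to get the exact identity
\[
\frac{1-|b(\lambda)|^2}{\prod_{j=1}^N|\lambda-\xi_j|^2}=\frac{1-e^{-u}}{u},\qquad u=2(1-\cos(N\theta))\in[0,4],
\]
and read off the bounds $(1-e^{-4})/4$ and $1$ from elementary calculus. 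Your route is cleaner and gives explicit constants; the paper's local asymptotics would generalize more readily to other $b$'s where no such global factorization is available. Both are valid.
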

Before the proof, we introduce some notation: given two positive functions $f,g$ on $\mathbb{T}$ and $E\subseteq\mathbb{T}$, we write $f\asymp g$ on $E$ to say that there exists a positive constant $C$
such that 
\[\frac{1}{C}g(\lambda)\leq f(\lambda)\leq Cg(\lambda), \qquad\lambda\in E.\]
Given $\lambda_0\in\mathbb{T}$, we say that $f\sim g$ as $\lambda\to\lambda_0$ if 
\[\lim_{\lambda\to\lambda_0}\frac{f(\lambda)}{g(\lambda)}=1.\]
\begin{proof}[Proof of Theorem \ref{T:Hbexp}]
    The function $b$ is entire and in $H^\infty_1$. It is also non-extreme, since
    \[\log(1-|b(\zeta)|)=\log(1-e^{\operatorname{Re}(\zeta^N)-1}), \qquad \zeta\in\mathbb{T},\]
    and therefore
    \[\int_{\mathbb{T}}\log(1-|b|)\,\operatorname{d}\!m=\frac{1}{2\pi} \int_0^{2\pi}\log(1-e^{\cos(N\theta)-1})\,\operatorname{d}\!\theta>-\infty.\]
    We have that 
    \[\sigma(b)=\{\zeta\in\mathbb{T}\colon |b(\zeta)|<1\}=\mathbb{T}\setminus\{\xi_1,\ldots,\xi_N\},\]
and we check that the condition of Theorem \ref{T:applicationbcontinuous} holds. Notice that, for every $k=1,\ldots,N$, writing $\lambda=e^{i\theta}\in\mathbb{T}$, we have the following uniform estimates in a neighborhood of $\xi_k=e^{\frac{2k\pi i}{N}}$:
    \begin{align*}
       \frac{1-|b(\lambda)|^2}{\prod_{j=1}^N|\lambda-\zeta_j|^2}\asymp \frac{1-e^{2\operatorname{Re}(\lambda^N)-2}}{|\lambda-\zeta_k|^2}=\frac{1-e^{2\cos(N\theta)-2}}{2-2\cos(\theta-\frac{2k\pi}{N})}.
    \end{align*}
    Now, as $\theta$ goes to $\frac{2k\pi}{N}$, we have that
    \[\frac{1-e^{2\cos(N\theta)-2}}{2-2\cos(\theta-\frac{2k\pi}{N})} \sim \frac{N^2(\theta-\frac{2k\pi}{N})^2}{(\theta-\frac{2k\pi}{N})^2}=N^2.\]
    In particular, we showed that
   \[0<\inf_{\lambda\in\sigma(b)}\frac{1-|b(\lambda)|^2}{\prod_{j=1}^N|\lambda-\zeta_j|^2}\leq \sup_{\lambda\in\sigma(b)}\frac{1-|b(\lambda)|^2}{\prod_{j=1}^N|\lambda-\zeta_j|^2} <\infty,\]
   and we conclude using Theorem \ref{T:applicationbcontinuous} that $H(b)=\mathcal{D}_\mu.$
\end{proof}

\bibliographystyle{plain}
\bibliography{mybibliography}

\end{document}